\newcommand{\bea}{\begin{eqnarray}}
\newcommand{\eea}{\end{eqnarray}}
\newcommand{\be}{\begin{equation}}
\newcommand{\ee}{\end{equation}}
\theoremstyle{plain}
\newtheorem{theorem}{Theorem}
\newtheorem{corollary}[theorem]{Corollary}
\newtheorem{proposition}[theorem]{Proposition}
\newtheorem{lemma}[theorem]{Lemma}
\newtheorem*{theorem*}{Theorem}
\theoremstyle{definition}
\newtheorem{definition}[theorem]{Definition}
\newtheorem{example}{Example}
\newcommand{\beas}{\begin{eqnarray*}}
	\newcommand{\eeas}{\end{eqnarray*}}
\numberwithin{equation}{section}
\numberwithin{theorem}{section}
\renewcommand{\d}{\mathrm{d}}
\newcommand{\T}{\mathrm{T}}
\newcommand{\Ad}{{\rm Ad}}
\newcommand{\ad}{{\rm ad}}
\newcommand{\R}{{\mathbb{R}}}
\def\email#1{{\tt#1}}
\begin{document}
	
	\centerline{\Large {\bf Reduction of exact symplectic manifolds}}
    \centerline{\Large{\bf and energy hypersurfaces}}
	\vskip 0.5cm
	
	\centerline{ J. Lange$^{1}$ and B.M. Zawora$^{2}$
	}
	\vskip 0.5cm
	
	\centerline{Department of Mathematical Methods in Physics, University of Warsaw,}
	\medskip
	\centerline{ul. Pasteura 5, 02-093 Warszawa, Poland}
    \medskip \centerline{\email{j.lange2@uw.edu.pl$^1$, b.zawora@uw.edu.pl$^2$}}
	
	\vskip 1cm
	
	\begin{abstract}{
This article introduces two reduction schemes for Hamiltonian systems on an exact symplectic manifold admitting Lie group symmetries. It is demonstrated that these reduction procedures are equivalent, by employing a modified Marsden–Meyer–Weinstein reduction theorem for exact symplectic manifolds and contact manifolds given by energy hypersurfaces. Each approach is illustrated through an example.}

		
		
		\medskip\noindent
		{\bf\raggedright Keywords:} Marsden--Meyer--Weinstein reduction, symplectic manifold, contact manifold, energy hypersurface.
        
        {\bf\raggedright MSC2020 codes:} 53D20, 53D05, 53D10 (Primary), 53C30, 53C12 (Secondary).
		
	\end{abstract}

{\setcounter{tocdepth}{2}
\def\baselinestretch{1}
\small
\def\addvspace#1{\vskip 1pt}
\parskip 0pt plus 0.1mm
\tableofcontents
}

	\section{Symplectic geometry}
 A {\it symplectic manifold} is a pair $(P,\omega)$, where $\omega\in \Omega^2(P)$ is closed and non-degenerate. If $\omega=-\d\theta$, for some $\theta\in \Omega^1(P)$, then $(P,\theta)$ is called an {\it exact symplectic manifold} \cite{AM_78}. The {\it Liouville vector field} associated with \((P,\theta)\) is the unique vector field $\nabla$ on $P$ such that $\theta$ is the contraction of $\omega$ with $\nabla$, i.e. $\theta=\iota_\nabla\omega$. Then, the triple $(P,\theta,\nabla)$ is an exact symplectic manifold $(P,\theta)$ with the Liouville vector field $\nabla$. Moreover, $\langle\cdot,\cdot\rangle$ denotes the natural pairing between a Lie algebra $\mathfrak{g}$ and its dual space $\mathfrak{g}^*$. Let $\mathfrak{X}(P)$ be the Lie algebra of vector fields on $P$. A vector field $X\in \mathfrak{X}(P)$ is {\it Hamiltonian} if $\iota_{X}\omega=df$ for some $f\in C^{\infty}(P)$. Then, $f$ is called a {\it  Hamiltonian function} of $X$. Since $\omega$ is non-degenerate, every   $f\in C^\infty(P)$ is the Hamiltonian function of a unique Hamiltonian vector field $X_f$ (see \cite{AM_78,OR_04}). Later on, it is assumed that all structures are smooth and defined over the reals. The manifolds are Hausdorff, connected, paracompact, and finite-dimensional.

\begin{definition}
\label{Def::Onohomoaction}
   A Lie group action $\Phi\colon G\times P\to P$ is an \textit{exact symplectic Lie group action} relative to $(P, \theta)$ if $\Phi_g^*\theta=\theta$ for each $g\in G$, where $\Phi_g\colon P\ni p \mapsto \Phi_g(p):=\Phi(g,p)\in P$. In other words, $    \mathcal{L}_{\xi_P}\theta=0$ for any $\xi\in\mathfrak{g}$.
\end{definition} 
For an exact symplectic Lie group action $\Phi\colon  G\times P\rightarrow P$, it follows that $\Phi_{g*}\nabla=\nabla$.

A symplectic momentum map for an exact symplectic manifold \((P,\theta)\) is defined using the fact that \(\omega=-\d\theta\).

\begin{definition}
\label{Def::MomentumMap1homo}
    An exact symplectic momentum map associated with $(P,\theta)$ and a Lie group action $\Phi\colon G\times P\rightarrow P$ is a map $\mathbf{J}_\theta^\Phi\colon P\rightarrow \mathfrak{g}^*$ such that
    \[
    \iota_{\xi_P}\theta=\langle \mathbf{J}^\Phi_\theta,\xi\rangle,\qquad \forall \xi\in\mathfrak{g}\,,
    \]
    where $\xi_P$ is the fundamental vector field of $\Phi$ related to $\xi\in\mathfrak{g}$, namely \linebreak $\xi_P(p) = \frac{\d}{\d t}\bigg|_{t=0}\Phi(\exp(t\xi),p)$ for any $p\in P$ and $\exp\colon \mathfrak{g}\rightarrow G$ stands for the exponential map.
\end{definition}

An exact symplectic momentum map $\mathbf{J}^\Phi_\theta\colon P\rightarrow \mathfrak{g}^*$ associated with a Lie group action $\Phi\colon G\times P\rightarrow P$ is {\it \(\Ad^{*}\)-equivariant}, i.e. \(\Ad^*_{g^{-1}}\circ \mathbf{J}^\Phi_\theta=\mathbf{J}_\theta^\Phi\circ\Phi_g\) for any \(g\in G\) \cite{AM_78,LRVZ_np}.

Let us recall some technical notions. A {\it  regular value} \cite{AM_78,Lee_13} of a map $F\colon M\rightarrow N$ is a point $y\in N$ such that for each $x\in F^{-1}(y)$ the map $\T_xF\colon \T_x M\rightarrow \T_{F(x)}N $ is surjective.

Now, consider the pre-image of the orbit of $\mu\in\mathfrak{g}^*$ relative to the natural action \(\mathbb{R}^\times\times\mathfrak{g}^*\ni(\lambda,\mu)\longmapsto \lambda\mu\in \mathfrak{g}^*\), where $\mathbb{R}^\times:= \mathbb{R}\setminus\{0\}$, namely
\[
(\mathbf{J}^\Phi_\theta)^{-1}(\mathbb{R}^{\times }\mu)=\{ p\in P\,\mid\, \exists\, \lambda\in\mathbb{R}^\times,\,\, \mathbf{J}^\Phi_\theta(p)=\lambda\mu\}.
\]
If $\mu$ is a regular value of $\mathbf{J}^\Phi_\theta$, the spaces $(\mathbf{J}^\Phi_\theta)^{-1}(\mu)$ and $(\mathbf{J}^\Phi_\theta)^{-1}(\mathbb{R}^\times\mu)$ are submanifolds of $P$ \cite{LRVZ_np}. Additionally, a Lie group action $\Phi\colon  G\times M\rightarrow M$ is {\it quotientable} when its space of orbits, denoted by $M/G$, is a manifold and the projection $\pi\colon M\rightarrow M/G$ is a submersion. If $\Phi$ is free and proper, it is quotientable \cite{AM_78,Lee_13}.

The following proposition is crucial in the Marsden-Meyer-Weinstein (MMW) reduction; the proof can be found in \cite{LRVZ_np}.

\begin{proposition}
\label{Prop::Kgroup}
    Let $\mathfrak{k}_{[\mu]}:=\ker\mu\cap\mathfrak{g}_{[\mu]}$, where $\ker\mu=\{\xi\in\mathfrak{g}\,\mid\,\langle\mu,\xi\rangle =0\}$ and $\mathfrak{g}_{[\mu]}=\{\xi\in \mathfrak{g}\,\mid\,\ad^{*}_{\xi}\mu\wedge \mu =0\}$. Then, $\mathfrak{k}_{[\mu]}$ is a Lie subalgebra of $\mathfrak{g}$.
\end{proposition}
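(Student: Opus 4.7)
The plan is to treat the trivial case $\mu=0$ separately (where $\ker\mu=\mathfrak{g}$ and the wedge condition is vacuous, so $\mathfrak{k}_{[\mu]}=\mathfrak{g}$), and then focus on $\mu\neq 0$. When $\mu\neq 0$, the condition $\ad^{*}_{\xi}\mu\wedge\mu=0$ in $\Lambda^{2}\mathfrak{g}^{*}$ is equivalent to the existence of a (unique) scalar $\chi(\xi)\in\R$ with
\[
\ad^{*}_{\xi}\mu=\chi(\xi)\mu.
\]
This defines a map $\chi\colon\mathfrak{g}_{[\mu]}\to\R$ which is manifestly linear by linearity of $\xi\mapsto\ad^{*}_{\xi}\mu$. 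So I would first record this reformulation of membership in $\mathfrak{g}_{[\mu]}$, since it turns the wedge condition into an algebraic identity that is far easier to close under brackets.

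Next, for $\xi,\eta\in\mathfrak{k}_{[\mu]}$ I would verify the two defining conditions for $[\xi,\eta]$ in turn. For the $\ker\mu$ part, the usual pairing identity $\langle\ad^{*}_{\xi}\mu,\eta\rangle=-\langle\mu,[\xi,\eta]\rangle$ combined with $\ad^{*}_{\xi}\mu=\chi(\xi)\mu$ and $\eta\in\ker\mu$ gives
\[
\langle\mu,[\xi,\eta]\rangle=-\chi(\xi)\langle\mu,\eta\rangle=0,
\]
so $[\xi,\eta]\in\ker\mu$. For the $\mathfrak{g}_{[\mu]}$ part, the standard identity $\ad^{*}_{[\xi,\eta]}=\ad^{*}_{\xi}\ad^{*}_{\eta}-\ad^{*}_{\eta}\ad^{*}_{\xi}$ (derived from the Jacobi identity applied to $\langle\mu,[[\xi,\eta],\cdot]\rangle$) yields
\[
\ad^{*}_{[\xi,\eta]}\mu=\ad^{*}_{\xi}(\chi(\eta)\mu)-\ad^{*}_{\eta}(\chi(\xi)\mu)=\bigl(\chi(\xi)\chi(\eta)-\chi(\eta)\chi(\xi)\bigr)\mu=0,
\]
so a fortiori $\ad^{*}_{[\xi,\eta]}\mu\wedge\mu=0$, i.e.\ $[\xi,\eta]\in\mathfrak{g}_{[\mu]}$. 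Combining the two gives $[\xi,\eta]\in\mathfrak{k}_{[\mu]}$, which is exactly what we need.

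The main obstacle is largely bookkeeping: one must make sure the sign convention for the coadjoint action is the one compatible with the pairing used in Definition \ref{Def::MomentumMap1homo}, and that the coadjoint bracket formula is applied correctly. Once that convention is pinned down, the proof reduces to two one-line computations based on $\ad^{*}_{\xi}\mu=\chi(\xi)\mu$. No linear algebra beyond this is required, and no assumption on whether $\chi$ is identically zero (i.e., whether $\mathfrak{g}_{[\mu]}$ coincides with the isotropy subalgebra $\mathfrak{g}_{\mu}$) is needed, since the commutator of two elements of $\mathfrak{g}_{[\mu]}$ in fact lands in $\mathfrak{g}_{\mu}\subseteq\mathfrak{g}_{[\mu]}$, which is a useful aside but not strictly necessary for the statement.
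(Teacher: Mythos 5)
Your proof is correct. The paper itself does not reproduce an argument for this proposition (it defers to \cite{LRVZ_np}), but your route --- treating $\mu=0$ trivially, rewriting the wedge condition for $\mu\neq 0$ as $\ad^{*}_{\xi}\mu=\chi(\xi)\mu$ with $\chi$ linear (which also gives that $\mathfrak{k}_{[\mu]}$ is a subspace), and then checking bracket-closure of $\ker\mu$ via the pairing identity and of $\mathfrak{g}_{[\mu]}$ via $\ad^{*}_{[\xi,\eta]}\mu=\bigl(\chi(\xi)\chi(\eta)-\chi(\eta)\chi(\xi)\bigr)\mu=0$ --- is the standard argument, is independent of the sign convention for $\ad^{*}$, and fully establishes the statement.
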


Proposition \ref{Prop::Kgroup} implies that there exists a unique connected and simply connected Lie subgroup of $G$, denoted as $K_{[\mu]}$, whose Lie algebra is $\mathfrak{k}_{[\mu]}$.

To simplify the notation, we call $(P,\theta,\nabla,h)$ an {\it exact symplectic Hamiltonian system}, where $(P,\theta,\nabla)$ is an exact symplectic manifold with the Liouville vector field $\nabla$ and $h\in C^\infty(P)$. Furthermore, if $h$ is $G$-invariant, then $(P,\theta,\nabla,h)$ is a {\it $G$-invariant exact Hamiltonian system}.

Now, the following proposition states the modified MMW reduction proven in \cite{LRVZ_np}.

\begin{theorem}
\label{Th::OnehomosymRed}
    Let $(P,\theta,\nabla,h)$ be a $G$-invariant exact symplectic Hamiltonian system, let $\mu\in\mathfrak{g}^*$ be a regular value of an exact symplectic momentum map $\mathbf{J}^\Phi_\theta\colon P\rightarrow \mathfrak{g}^*$ \linebreak associated with an exact symplectic Lie group action $\Phi\colon G\times P\rightarrow P$ that is quotientable on $(\mathbf{J}^\Phi_\theta)^{-1}(\mathbb{R}^{\times}\mu)$ by $K_{[\mu]}$. Then, one obtains an exact symplectic Hamiltonian system $(P_{[\mu]}\!:=\!(\mathbf{J}^\Phi_\theta)^{-1}(\mathbb{R}^{\times }\mu)/K_{[\mu]},\theta_{[\mu]},\!\!\nabla_{[\mu]},\!h_{[\mu]})$ such that
    \[
    \tau^*_{[\mu]}\omega_{[\mu]}=\jmath_{[\mu]}^*\omega\,,\qquad \tau_{[\mu]*}\nabla=\nabla_{[\mu]}\,,\qquad \tau_{[\mu]}^*h_{[\mu]}=\jmath_{[\mu]}^*h\,,
    \]
    where $\tau_{[\mu]}\colon(\mathbf{J}_\theta^\Phi)^{-1}(\mathbb{R}^{\times}\mu)\rightarrow P_{[\mu]}$ and $\jmath_{[\mu]}\colon(\mathbf{J}^\Phi_\theta)^{-1}(\mathbb{R}^{\times}\mu)\hookrightarrow P$ are the canonical projection and canonical immersion, respectively.
\end{theorem}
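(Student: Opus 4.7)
The plan is to adapt the Marsden--Meyer--Weinstein reduction scheme to the exact symplectic setting, where the Liouville vector field $\nabla$ and the rescaling action $\mathbb{R}^\times\times\mathfrak{g}^*\to\mathfrak{g}^*$ force the replacement of the usual coadjoint-isotropy group $G_\mu$ by the subgroup $K_{[\mu]}$ with Lie algebra $\ker\mu\cap\mathfrak{g}_{[\mu]}$. Writing $Q:=(\mathbf{J}_\theta^\Phi)^{-1}(\mathbb{R}^\times\mu)$ and using the $\Ad^*$-equivariance of $\mathbf{J}_\theta^\Phi$, the first step is to check that $Q$ is stable under $K_{[\mu]}$: this is immediate because for $\xi\in\mathfrak{g}_{[\mu]}$ one has $\ad^*_\xi\mu\wedge\mu=0$, so $\exp(t\xi)$ leaves the ray $\mathbb{R}^\times\mu$ invariant; equivariance then transports this invariance to $Q$.

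Next, I would prove that $\nabla$ is tangent to $Q$. Differentiating $\iota_\nabla\omega=\theta$ and using $\iota_\nabla\theta=0$ together with $\d\theta=-\omega$ yields $\mathcal{L}_\nabla\theta=-\theta$, and from $\Phi_{g*}\nabla=\nabla$ one has $[\nabla,\xi_P]=0$ for every $\xi\in\mathfrak{g}$. Substituting these into $\mathcal{L}_\nabla\iota_{\xi_P}\theta=\iota_{[\nabla,\xi_P]}\theta+\iota_{\xi_P}\mathcal{L}_\nabla\theta$ gives $\mathcal{L}_\nabla\langle\mathbf{J}_\theta^\Phi,\xi\rangle=-\langle\mathbf{J}_\theta^\Phi,\xi\rangle$, so the flow of $\nabla$ rescales $\mathbf{J}_\theta^\Phi$ by $e^{-t}$ and hence preserves $Q$. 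This is precisely the geometric reason why the level set must be enlarged from $(\mathbf{J}_\theta^\Phi)^{-1}(\mu)$ to the ray preimage.

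Having established $K_{[\mu]}$-invariance of $Q$ and tangency of $\nabla$, I would descend $\theta$ and $\nabla$. For $\xi\in\mathfrak{k}_{[\mu]}\subset\ker\mu$ and any $p\in Q$ with $\mathbf{J}_\theta^\Phi(p)=\lambda\mu$, one has $\iota_{\xi_P(p)}\theta=\langle\lambda\mu,\xi\rangle=0$, while $\jmath_{[\mu]}^*\theta$ is $K_{[\mu]}$-invariant by the exact symplectic condition. Hence $\jmath_{[\mu]}^*\theta$ is basic, giving a unique $\theta_{[\mu]}\in\Omega^1(P_{[\mu]})$ with $\tau_{[\mu]}^*\theta_{[\mu]}=\jmath_{[\mu]}^*\theta$; passing to exterior derivatives yields $\tau_{[\mu]}^*\omega_{[\mu]}=\jmath_{[\mu]}^*\omega$ with $\omega_{[\mu]}:=-\d\theta_{[\mu]}$. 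Similarly, $\nabla$ projects to $\nabla_{[\mu]}$ by $G$-invariance. The main obstacle is proving non-degeneracy of $\omega_{[\mu]}$: one must identify the kernel of $\jmath_{[\mu]}^*\omega$ with the tangent space to the $K_{[\mu]}$-orbit. Using $\iota_{\xi_P}\omega=\d\langle\mathbf{J}_\theta^\Phi,\xi\rangle$, a direct computation shows that an element of $\T_pQ$ lies in the $\omega$-orthogonal of $\T_pQ$ iff it equals $\xi_P(p)$ for some $\xi\in\ker\mu$; requiring in addition that $\xi_P(p)\in\T_pQ$, equivalent via equivariance to $\ad^*_\xi\mu\in\mathbb{R}\mu$, forces $\xi\in\mathfrak{k}_{[\mu]}$.

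Finally, I would verify the exact symplectic structure of the quotient and descent of the Hamiltonian. Applying $\tau_{[\mu]}^*$ to $\iota_{\nabla_{[\mu]}}\omega_{[\mu]}$ and using $\tau_{[\mu]*}\nabla=\nabla_{[\mu]}$ together with the pullback identity gives $\jmath_{[\mu]}^*\iota_\nabla\omega=\jmath_{[\mu]}^*\theta=\tau_{[\mu]}^*\theta_{[\mu]}$, so $\theta_{[\mu]}=\iota_{\nabla_{[\mu]}}\omega_{[\mu]}$ by the uniqueness of the basic descent. The $G$-invariance of $h$ then guarantees that $\jmath_{[\mu]}^*h$ is $K_{[\mu]}$-invariant and descends to $h_{[\mu]}$ with $\tau_{[\mu]}^*h_{[\mu]}=\jmath_{[\mu]}^*h$. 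The only genuinely delicate step in the whole argument is the non-degeneracy computation; everything else amounts to bookkeeping for descent along $\tau_{[\mu]}$ and $\jmath_{[\mu]}$.
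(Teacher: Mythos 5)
This theorem is not proved in the paper at all: it is quoted as the ``modified MMW reduction proven in \cite{LRVZ_np}'', so there is no internal proof to compare your argument against. Judged on its own, your sketch is a correct reconstruction of the expected argument, and its main computations check out with the paper's conventions: $\mathcal{L}_\nabla\theta=-\theta$ and $[\nabla,\xi_P]=0$ give $\mathcal{L}_\nabla\langle\mathbf{J}^\Phi_\theta,\xi\rangle=-\langle\mathbf{J}^\Phi_\theta,\xi\rangle$, hence tangency of $\nabla$ to $Q:=(\mathbf{J}^\Phi_\theta)^{-1}(\R^\times\mu)$; $\jmath_{[\mu]}^*\theta$ is horizontal (since $\mathfrak{k}_{[\mu]}\subset\ker\mu$ and $\mathbf{J}^\Phi_\theta(p)\in\R^\times\mu$) and $K_{[\mu]}$-invariant, so it is basic; the identity $\iota_{\xi_P}\omega=\d\langle\mathbf{J}^\Phi_\theta,\xi\rangle$ is correct; and the descent of $\nabla$, of $h$, and the verification $\iota_{\nabla_{[\mu]}}\omega_{[\mu]}=\theta_{[\mu]}$ via injectivity of $\tau_{[\mu]}^*$ are all fine.

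The one step you should not label a ``direct computation'' without comment is the identification of $(\T_pQ)^{\omega}$ with $\{\xi_P(p)\,:\,\xi\in\ker\mu\}$. The inclusion $\supseteq$ is immediate from $\omega(\xi_P(p),v)=\langle\T_p\mathbf{J}^\Phi_\theta(v),\xi\rangle$ and $\T_p\mathbf{J}^\Phi_\theta(\T_pQ)\subseteq\R\mu$, but equality is a dimension count requiring both that $\T_pQ$ has codimension $\dim\mathfrak{g}-1$ and that $\xi\mapsto\xi_P(p)$ is injective on $\ker\mu$, i.e.\ surjectivity of $\T_p\mathbf{J}^\Phi_\theta$ (local freeness) at \emph{every} point of the ray preimage, not only over $\mu$ itself, which is where the regular-value hypothesis is stated. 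This is precisely the point the paper also outsources to \cite{LRVZ_np} when it asserts that $(\mathbf{J}^\Phi_\theta)^{-1}(\R^\times\mu)$ is a submanifold, so it is not a defect specific to your proof, but in a self-contained write-up it needs to be isolated as a lemma (e.g.\ via the Liouville flow carrying $(\mathbf{J}^\Phi_\theta)^{-1}(\mu)$ onto positive multiples, plus an argument for negative multiples) rather than absorbed into the non-degeneracy computation. With that caveat made explicit, your argument delivers $\ker(\jmath_{[\mu]}^*\omega)_p=\{\xi_P(p):\xi\in\mathfrak{k}_{[\mu]}\}=\ker\T_p\tau_{[\mu]}$ and hence the stated conclusions.
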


Note that in Theorem \ref{Th::OnehomosymRed} the Liouville vector field $\nabla$ denotes both a vector on $(\mathbf{J}^\Phi_\theta)^{-1}(\R^\times\mu)$ and on $P$, since $\nabla$ is tangent to $(\mathbf{J}^\Phi_\theta)^{-1}(\R^\times \mu)$, see \cite{LRVZ_np}.

The following proposition ensures the reduction of a Hamiltonian vector field.

\begin{proposition}
\label{Prop::OneHomoDynRed}
    Let assumptions of Theorem \ref{Th::OnehomosymRed} hold. Then, the flow $\mathcal{F}_t$ of $X_h$ leaves $(\mathbf{J}^{\Phi}_{\theta})^{-1}(\R^{\times } \mu)$ invariant and induces a unique flow $\mathcal{K}_t$ on $(\mathbf{J}^{\Phi}_{\theta})^{-1}(\R^{\times} \mu)/K_{[\mu]}$ satisfying \linebreak  $\pi_{[\mu]}\circ \mathcal{F}_t=\mathcal{K}_t\circ \pi_{[\mu]}$.
\end{proposition}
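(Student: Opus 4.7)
The plan is to carry out a standard three-step argument: (i) show that $X_h$ is tangent to the submanifold $N:=(\mathbf{J}^\Phi_\theta)^{-1}(\R^\times\mu)$; (ii) show the restriction $X_h|_N$ is $K_{[\mu]}$-invariant, and therefore $\pi_{[\mu]}$-projectable; (iii) take $\mathcal{K}_t$ to be the flow of the projected vector field and read off both the intertwining identity and its uniqueness.

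For step (i) I first establish a Noether-type conservation law. Applying Cartan's magic formula to the defining identity $\mathcal{L}_{\xi_P}\theta=0$ and using $\d\theta=-\omega$ together with $\iota_{\xi_P}\theta=\langle\mathbf{J}^\Phi_\theta,\xi\rangle$ yields $\iota_{\xi_P}\omega=\d\langle\mathbf{J}^\Phi_\theta,\xi\rangle$. Combining this with $\iota_{X_h}\omega=\d h$ and the $G$-invariance $\xi_P(h)=0$ gives, for every $\xi\in\mathfrak{g}$,
\[
X_h\langle\mathbf{J}^\Phi_\theta,\xi\rangle=\omega(\xi_P,X_h)=-\omega(X_h,\xi_P)=-\d h(\xi_P)=-\xi_P(h)=0.
\]
Hence $\mathbf{J}^\Phi_\theta\circ\mathcal{F}_t=\mathbf{J}^\Phi_\theta$ on the domain of the flow, so each level set $(\mathbf{J}^\Phi_\theta)^{-1}(\lambda\mu)$ with $\lambda\in\R^\times$ is $\mathcal{F}_t$-invariant, and so is their union $N$. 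This already proves the first half of the statement.

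For step (ii) the $G$-invariance of $h$ together with $\Phi_g^*\omega=\omega$ (a consequence of $\Phi_g^*\theta=\theta$) forces $\Phi_g^*X_h=X_h$ for every $g\in G$, hence in particular for every $g\in K_{[\mu]}$. Since the $K_{[\mu]}$-action on $N$ is quotientable by assumption, the $K_{[\mu]}$-invariant restriction $X_h|_N$ is $\pi_{[\mu]}$-projectable to a unique vector field $Y$ on $P_{[\mu]}$.

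For step (iii) I take $\mathcal{K}_t$ to be the flow of $Y$; the $\pi_{[\mu]}$-relatedness of $X_h|_N$ and $Y$ immediately yields $\pi_{[\mu]}\circ\mathcal{F}_t=\mathcal{K}_t\circ\pi_{[\mu]}$, and the uniqueness of $\mathcal{K}_t$ follows from the surjectivity of $\pi_{[\mu]}$. The only mildly delicate point is the sign bookkeeping in the Noether computation of step (i), which is performed here with the ``exact'' convention $\iota_{\xi_P}\theta=\langle\mathbf{J}^\Phi_\theta,\xi\rangle$ rather than the more common $\iota_{\xi_P}\omega=\d\langle\mathbf{J},\xi\rangle$; once this is handled, the remainder of the argument mirrors the classical MMW dynamical-reduction proof and relies only on the reduction structure already supplied by Theorem \ref{Th::OnehomosymRed}.
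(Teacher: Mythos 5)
Your proof is correct: the Noether-type conservation of $\mathbf{J}^\Phi_\theta$ along the flow of $X_h$, the $G$-invariance of $X_h$ (hence $K_{[\mu]}$-invariance), and projectability through the surjective submersion $\pi_{[\mu]}$ is precisely the standard argument this proposition rests on. The paper itself states the proposition without writing out a proof (it is deferred to the cited reduction reference), and your argument, including the sign bookkeeping with the convention $\iota_{\xi_P}\theta=\langle\mathbf{J}^\Phi_\theta,\xi\rangle$, matches that intended route.
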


\section{Contact geometry}
A {\it contact manifold} is a pair $(M,\mathcal{C})$, where $M$ is a $(2n-1)$-dimensional manifold and $\mathcal{C}$ is the so-called {\it contact distribution}, that is, a {\it maximally non-integrable distribution} with corank one on $M$, i.e. locally is given by $\mathcal{C}|_U=\ker\eta$, for some open neighbourhood $U$ on $M$ and $\eta\in \Omega^1(U)$ such that $\eta\wedge (\d\eta)^n$ is a volume form on $U$ and $n$ is a positive natural number \cite{Ge_08}. Then, $\eta$ is called a {\it contact form}. A contact manifold $(M,\mathcal{C})$ is called {\it co-orientable} if it admits a contact form $\eta\in\Omega^1(M)$ and in that case will be denoted as $(M,\eta)$. A co-orientable contact manifold $(M,
\eta)$ admits a unique vector field $R$ satisfying $\iota_R\eta=1$ and $\iota_R\d\eta=0$ called the {\it Reeb vector field} \cite{Ge_08}. A diffeomorphism on $M$ which preserves $\mathcal{C}$ is called a {\it contactomorphism}, additionally, if it leaves $\eta$ invariant, it is called a {\it strong contactomorphism}. 

\begin{definition}
    A Lie group action $\Phi\colon G\times M\to M$ is a \textit{contact Lie group action} relative to $(M,\eta)$ if the map $\Phi_g\colon M\rightarrow M$ is a strong contactomorphism for any $g\in G$. In other words, $\Phi^*_g\eta=\eta$ for any $g\in G$.
\end{definition}

Then, one can define a contact momentum map in the following manner.

\begin{definition}
\label{Def::ContactMomentumMap}
     A contact momentum map associated with $(M,\eta)$ and a Lie group action $\Phi\colon G\times M\rightarrow M$ is a map $\mathbf{J}^\Phi_\eta\colon M\rightarrow \mathfrak{g}^*$ such that
    \[
    \iota_{\xi_M}\eta=\langle \mathbf{J}^\Phi_\eta,\xi\rangle,\qquad \forall \xi\in\mathfrak{g}\,,
    \]
    where $\xi_M$ is the fundamental vector field associated with $\Phi$ and $\xi$.
\end{definition}
Similarly, a contact momentum map $\mathbf{J}^\Phi_\eta\colon M\rightarrow \mathfrak{g}^*$ associated with a Lie group action $\Phi\colon G\times M\rightarrow M$ is \(\Ad^{*}\)-equivariant \cite{AM_78,OR_04}.

The following theorem presents the MMW reduction theorem for co-orientable contact manifolds. Remarkably, this theorem can be generalised to all contact manifolds, see \cite{LRVZ_np}. However, this work is primarily concerned only with co-orientable contact manifolds.

\begin{theorem}
\label{Th::conRed}
Let $(M,\eta)$ be a contact manifold, and let $\Phi\colon G\times M\rightarrow M$ be a contact Lie group action that is quotientable on $(\mathbf{J}^\Phi_\eta)^{-1}(\mathbb{R}^\times\mu)$ by $K_{[\mu]}$. Assume that $\mu\in \mathfrak{g}^*$ is a regular value of a contact momentum map $\mathbf{J}^\Phi_\eta\colon M\rightarrow \mathfrak{g}^*$ associated with $\Phi$. Then, $\left(M_{[\mu]}:=(\mathbf{J}^\Phi_\eta)^{-1}(\mathbb{R}^\times\mu)/K_{[\mu]},\eta_{[\mu]}\right)$ is a contact manifold, while $\eta_{[\mu]}$ is uniquely defined by
\[
\pi_{[\mu]}^*\eta_{[\mu]}=i_{[\mu]}^*\eta\,,
\]
where $i_{[\mu]}\colon (\mathbf{J}^\Phi_\eta)^{-1}(\mathbb{R}^{\times} \mu)\hookrightarrow M$ and $\pi_{
[\mu]}\colon(\mathbf{J}^\Phi_\eta)^{-1}(\mathbb{R}^{\times}\mu)\rightarrow (\mathbf{J}^\Phi_\eta)^{-1}(\mathbb{R}^{\times} \mu)/K_{[\mu]}$ are the natural immersion and the canonical projection, respectively. Additionally, the Reeb vector field $R$ associated with $(M,\eta)$ projects onto the Reeb vector field $R_{[\mu]}$ associated with $(M_{[\mu]},\eta_{[\mu]})$, i.e. $\pi_{[\mu]*}R=R_{[\mu]}$.
\end{theorem}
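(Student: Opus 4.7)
The plan is to mirror, step by step, the proof of Theorem \ref{Th::OnehomosymRed} in the co-orientable contact setting. First I would check that $(\mathbf{J}^\Phi_\eta)^{-1}(\mathbb{R}^\times\mu)$ is an embedded submanifold of $M$: regularity of $\mu$ makes $T_p\mathbf{J}^\Phi_\eta$ surjective at every point of the level set $(\mathbf{J}^\Phi_\eta)^{-1}(\mu)$, and since $\mathbb{R}^\times\mu$ is a one-dimensional submanifold of $\mathfrak{g}^*$ through $\mu$, transversality extends to a neighbourhood and yields the submanifold structure. Next I would verify that $\xi_M$ is tangent to this preimage for every $\xi\in\mathfrak{g}_{[\mu]}$: by $\Ad^*$-equivariance,
\[
T_p\mathbf{J}^\Phi_\eta\cdot \xi_M(p)=-\ad^*_\xi\mathbf{J}^\Phi_\eta(p)=-\lambda\,\ad^*_\xi\mu,
\]
which, by $\ad^*_\xi\mu\wedge\mu=0$, is a scalar multiple of $\mu$ and hence tangent to $\mathbb{R}^\times\mu$. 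Combined with the quotientability hypothesis, this makes $M_{[\mu]}$ a manifold with $\pi_{[\mu]}$ a surjective submersion. Descent of $\eta$ is then a standard basic-form check: $\mathcal{L}_{\xi_M}\eta=0$ follows from $\Phi_g^*\eta=\eta$, while for $\xi\in\mathfrak{k}_{[\mu]}\subseteq\ker\mu$ one has
\[
\iota_{\xi_M}\eta=\langle\mathbf{J}^\Phi_\eta,\xi\rangle=\lambda\langle\mu,\xi\rangle=0
\]
on the preimage, so $i_{[\mu]}^*\eta$ is $K_{[\mu]}$-basic and factors uniquely as $\pi_{[\mu]}^*\eta_{[\mu]}=i_{[\mu]}^*\eta$.

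The main technical obstacle is showing that $\eta_{[\mu]}$ is a contact form. The cleanest strategy is to identify the characteristic distribution of $i_{[\mu]}^*\eta$ on $(\mathbf{J}^\Phi_\eta)^{-1}(\mathbb{R}^\times\mu)$ with the tangent distribution of the $K_{[\mu]}$-orbits: the inclusion of the orbit tangent space in the characteristic distribution is immediate from the basicness computation above, while the reverse inclusion uses $\Ad^*$-equivariance together with the definitions of $\mathfrak{g}_{[\mu]}$ and $\ker\mu$, exactly in the spirit of the proof of Theorem \ref{Th::OnehomosymRed}. A dimension count then shows that $\eta_{[\mu]}\wedge(\d\eta_{[\mu]})^{m}$, with $2m+1=\dim M_{[\mu]}$, is nowhere zero because its pullback along $\pi_{[\mu]}$ is the contraction of $\eta\wedge(\d\eta)^{n}$ with a transversal frame to the $K_{[\mu]}$-orbits. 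An equivalent but more conceptual route is to apply Theorem \ref{Th::OnehomosymRed} to the symplectification $(M\times\mathbb{R}^\times,\,t\,\mathrm{pr}_M^*\eta)$ with the lifted $G$-action acting trivially on the second factor, and then recover $(M_{[\mu]},\eta_{[\mu]})$ from the resulting reduced exact symplectic manifold.

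For the Reeb projection, uniqueness of $R$ together with $\Phi_g^*\eta=\eta$ implies $\Phi_{g*}R=R$, and consequently $[R,\xi_M]=0$ for every $\xi\in\mathfrak{g}$. Therefore
\[
R\bigl(\langle\mathbf{J}^\Phi_\eta,\xi\rangle\bigr)=\mathcal{L}_R(\iota_{\xi_M}\eta)=\iota_{\xi_M}\mathcal{L}_R\eta=0,
\]
using $\mathcal{L}_R\eta=\d\iota_R\eta+\iota_R\d\eta=0$. Thus $\mathbf{J}^\Phi_\eta$ is constant along $R$, so $R$ is tangent to the preimage and, being $K_{[\mu]}$-invariant, descends to a vector field $R_{[\mu]}:=\pi_{[\mu]*}R$. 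The identities $\pi_{[\mu]}^*(\iota_{R_{[\mu]}}\eta_{[\mu]})=\iota_R i_{[\mu]}^*\eta=1$ and $\pi_{[\mu]}^*(\iota_{R_{[\mu]}}\d\eta_{[\mu]})=\iota_R i_{[\mu]}^*\d\eta=0$ identify $R_{[\mu]}$ as the Reeb vector field of $(M_{[\mu]},\eta_{[\mu]})$.
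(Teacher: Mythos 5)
You should first note that the paper itself contains no proof of Theorem \ref{Th::conRed}: it is imported verbatim from \cite{LRVZ_np}, so there is no in-paper argument to match against. Judged on its own, your outline has the right architecture (submanifold structure of the ray preimage, tangency of the $\mathfrak{k}_{[\mu]}$-directions, basicness and descent of $i_{[\mu]}^*\eta$, identification of the null directions with the orbit directions, projection of the Reeb field), and the Reeb part is complete and correct: $\Phi_{g*}R=R$, $[R,\xi_M]=0$, constancy of $\mathbf{J}^\Phi_\eta$ along $R$, tangency of $R$ to $(\mathbf{J}^\Phi_\eta)^{-1}(\R^\times\mu)$, and the two pullback identities combined with injectivity of $\pi_{[\mu]}^*$ on forms.

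There are, however, two genuine gaps. First, your submanifold argument does not prove what is claimed: regularity of $\mu$ gives surjectivity of $\T_p\mathbf{J}^\Phi_\eta$ only over the level $(\mathbf{J}^\Phi_\eta)^{-1}(\mu)$, and openness of transversality only yields a neighbourhood of that level set, whereas $(\mathbf{J}^\Phi_\eta)^{-1}(\R^\times\mu)$ contains points mapping to $\lambda\mu$ with $\lambda$ arbitrary; unlike the exact symplectic case there is no Liouville flow on $M$ transporting regularity along the ray, so transversality of $\mathbf{J}^\Phi_\eta$ to $\R^\times\mu$ along the whole preimage must be argued separately (this is exactly the technical point settled in \cite{LRVZ_np}). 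Second, and more seriously, the heart of the theorem is the nondegeneracy of $\eta_{[\mu]}$, i.e. the equality $\{w\in \T_pN \,\mid\, \eta(w)=0,\ \d\eta(w,u)=0\ \forall u\in \T_pN\}=\T_p\bigl(K_{[\mu]}\cdot p\bigr)$ on $N:=(\mathbf{J}^\Phi_\eta)^{-1}(\R^\times\mu)$, together with the dimension count making $\d\eta_{[\mu]}$ nondegenerate on $\ker\eta_{[\mu]}$. You prove only the easy inclusion (orbit directions are null); the reverse inclusion is precisely where the definitions $\mathfrak{k}_{[\mu]}=\ker\mu\cap\mathfrak{g}_{[\mu]}$ and the use of the ray $\R^\times\mu$ rather than the single level $\mu$ enter, and asserting that it goes through ``in the spirit of'' Theorem \ref{Th::OnehomosymRed} is not a proof. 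Likewise, the statement that the pullback of $\eta_{[\mu]}\wedge(\d\eta_{[\mu]})^{m}$ is ``the contraction of $\eta\wedge(\d\eta)^n$ with a transversal frame'' is not meaningful as written, since $N$ is a proper submanifold of $M$ and $\eta\wedge(\d\eta)^n$ lives on $M$. Your alternative route via the symplectification $(M\times\R^\times,\ t\,\mathrm{pr}_M^*\eta)$ is indeed the cleaner way to reduce the statement to Theorem \ref{Th::OnehomosymRed}, but it also requires the omitted details: the lifted momentum map is $t\,\mathbf{J}^\Phi_\eta$, one must match regular values and quotientability upstairs and downstairs, and one must check that the reduced exact symplectic manifold is again the symplectization of the candidate quotient so that the contact form $\eta_{[\mu]}$ is recovered.
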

Similarly, in Theorem \ref{Th::conRed} the Reeb vector field $R$ denotes both a vector on $(\mathbf{J}^\Phi_\eta)^{-1}(\R^\times\mu)$ and on $M$, since $R$ is tangent to $(\mathbf{J}^\Phi_\eta)^{-1}(\R^\times\mu)$, see \cite{LRVZ_np}.

\section{Reduction of energy hypersurfaces}
\label{Sec::RestrRed}
This section presents the following scheme of a reduction: first, a modified symplectic MMW reduction procedure and then the restriction to the level set of the reduced Hamiltonian function. An illustrative example is provided.

Recall the crucial notion used hereafter. A vector field $Y\in\mathfrak{X}(M)$ is {\it transverse} to a submanifold $N$ of $M$ if $\langle Y_x\rangle+\T_xN=\T_xM$ at each $x\in N$. Then, we write $Y\pitchfork N$.

The following lemma shows how contact geometry arises from studying Hamiltonian dynamics \cite[Lemma 1.4.5]{Ge_08}.

\begin{lemma}
\label{Lemm::hypersurf}
    Let $(P,\theta,\nabla,h)$ be an exact Hamiltonian system such that $\nabla\pitchfork h^{-1}(c)$ for a regular value $c\in\mathbb{R}$ of $h\colon P\rightarrow \R$. Then, $(S:=h^{-1}(c),\eta:=i^*\theta)$ is a contact manifold, where $i\colon S\hookrightarrow P$ is the natural embedding. Moreover, the Reeb vector field associated with $\eta\in \Omega^1(S)$ is a rescaled Hamiltonian vector field associated with $h$, that is, the flows of $X_h$ and $R$ coincide up to reparametrisation.
\end{lemma}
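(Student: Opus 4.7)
The plan is to split the proof into two parts. First, verify that $\eta = i^*\theta$ satisfies the contact condition $\eta\wedge(\d\eta)^{n-1}\neq 0$ on $S$. Second, identify the Reeb vector field of $\eta$ as a pointwise rescaling of the restriction $X_h|_S$. Throughout, I would exploit the defining identities $\omega=-\d\theta$, $\theta=\iota_\nabla\omega$ and $\iota_{X_h}\omega=\d h$, together with the transversality assumption $\nabla\pitchfork S$.

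For the contact structure, the starting point is the Liouville-type identity
\[
\iota_\nabla\omega^n = n\,(\iota_\nabla\omega)\wedge\omega^{n-1}=n\,\theta\wedge\omega^{n-1}.
\]
Because $\omega^n$ is a volume form on the $2n$-dimensional manifold $P$ and $\nabla$ is transverse to the hypersurface $S$, a standard transversality argument guarantees that $i^*(\iota_\nabla\omega^n)$ is a nowhere-vanishing top form on $S$. Using $\d\eta = i^*\d\theta = -i^*\omega$ one obtains
\[
i^*(\iota_\nabla\omega^n) = n\, i^*\theta\wedge(i^*\omega)^{n-1} = (-1)^{n-1}n\,\eta\wedge(\d\eta)^{n-1},
\]
so $\eta\wedge(\d\eta)^{n-1}$ is a volume form on $S$ and hence $(S,\eta)$ is a co-orientable contact manifold.

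For the Reeb field, first note that $X_h$ is tangent to $S$, since $X_h(h)=\iota_{X_h}\iota_{X_h}\omega=0$, so its restriction $X_h|_S$ is a well-defined vector field on $S$. Using $\iota_{X_h}\omega=\d h$ one computes
\[
\iota_{X_h|_S}\d\eta = -i^*\iota_{X_h}\omega = -i^*\d h = 0,
\]
while the Liouville relation gives
\[
\iota_{X_h}\theta = \iota_{X_h}\iota_\nabla\omega = \omega(\nabla,X_h) = -\d h(\nabla)=-\nabla(h).
\]
The transversality $\nabla\pitchfork S$, together with the identification $\T_xS=\ker\d h_x$, forces $\nabla(h)$ to be nowhere zero on $S$. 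Therefore
\[
R := -\frac{1}{\nabla(h)|_S}\,X_h|_S
\]
is well defined and satisfies $\iota_R\eta=1$ and $\iota_R\d\eta=0$, so it is the Reeb vector field of $\eta$. As $R$ is a nowhere-vanishing scalar multiple of $X_h|_S$, the two fields share their integral curves up to a reparametrisation of time.

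The argument is essentially a sequence of direct computations; the only points requiring some care are the two appeals to transversality — once to ensure that $i^*(\iota_\nabla\omega^n)$ is a volume form on $S$, and once to guarantee that the normalising factor $\nabla(h)$ is non-vanishing — but both follow transparently from $\T_xS=\ker\d h_x$ and the defining condition $\langle\nabla_x\rangle + \T_xS = \T_xP$.
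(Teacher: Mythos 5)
Your proof is correct, and it is essentially the standard argument: the paper itself does not prove this lemma but cites Geiges (Lemma 1.4.5), whose proof is exactly your computation that $\eta\wedge(\d\eta)^{n-1}$ is proportional to $i^*(\iota_\nabla\omega^n)$ together with the observation that $X_h|_S$ lies in $\ker\d\eta$ and pairs non-trivially with $\eta$ by transversality. Both transversality appeals and the normalisation $R=-\bigl(\nabla(h)|_S\bigr)^{-1}X_h|_S$ are handled correctly under the paper's sign conventions, so nothing is missing.
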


Contact manifolds $(S,\eta)$ that arise from Lemma \ref{Lemm::hypersurf} are called {\it energy hypersurfaces of $(P,\theta,\nabla,h)$}.

\begin{lemma}
\label{Lemm::SymIndCon}
    Let $(S,\eta)$ be an energy hypersurface of an $G$-invariant exact Hamiltonian system $(P,\theta,\nabla,h)$. Then, an exact symplectic momentum map $\mathbf{J}^\Phi_\theta\colon P\rightarrow \mathfrak{g}^*$ associated with a Lie group action $\Phi\colon G\times P\rightarrow P$ induces a contact momentum map $\mathbf{J}^{\Phi^S}_\eta\colon S\rightarrow \mathfrak{g}^*$ related to a contact Lie group action $\Phi^S\colon G\times S\rightarrow S$.
\end{lemma}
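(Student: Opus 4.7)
The plan is to observe that the $G$-action $\Phi$ already restricts to $S$ because $h$ is $G$-invariant, and then to check that the pullback of $\mathbf{J}^\Phi_\theta$ by the inclusion $i\colon S\hookrightarrow P$ satisfies Definition \ref{Def::ContactMomentumMap}.

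First I would construct $\Phi^S$. Since $h\circ\Phi_g=h$ for every $g\in G$, the map $\Phi_g$ preserves the level set $S=h^{-1}(c)$, so setting $\Phi^S_g:=\Phi_g\circ i$ yields a well-defined map $S\to S$, and the group action axioms are inherited from $\Phi$. This gives $\Phi^S\colon G\times S\to S$ and, in particular, $i\circ\Phi^S_g=\Phi_g\circ i$. Checking that $\Phi^S$ is a contact Lie group action is then a one-line naturality computation:
\[
(\Phi^S_g)^*\eta=(\Phi^S_g)^*i^*\theta=(i\circ\Phi^S_g)^*\theta=(\Phi_g\circ i)^*\theta=i^*\Phi_g^*\theta=i^*\theta=\eta.
\]

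Next I would define $\mathbf{J}^{\Phi^S}_\eta:=\mathbf{J}^\Phi_\theta\circ i\colon S\to\mathfrak{g}^*$ and verify the contact momentum map condition. The key point — and the one technical step that needs care — is that the fundamental vector field $\xi_P$ is tangent to $S$. This follows from $G$-invariance of $h$: differentiating $h\circ\Phi_{\exp(t\xi)}=h$ at $t=0$ gives $\mathcal{L}_{\xi_P}h=0$, hence $\xi_P$ is tangent to every level set of $h$, and there exists a unique $\xi_S\in\mathfrak{X}(S)$ with $i_*\xi_S=\xi_P\circ i$. By functoriality of the fundamental vector field construction, $\xi_S$ coincides with the fundamental vector field of $\Phi^S$ associated with $\xi$. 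With this in hand,
\[
\iota_{\xi_S}\eta=\iota_{\xi_S}i^*\theta=i^*\iota_{\xi_P}\theta=i^*\langle\mathbf{J}^\Phi_\theta,\xi\rangle=\langle\mathbf{J}^{\Phi^S}_\eta,\xi\rangle,
\]
which is exactly the required identity.

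The only nontrivial obstacle in the argument is establishing tangency of $\xi_P$ to $S$, and this is precisely where the $G$-invariance of $h$ enters the proof; every other step is a formal pullback manipulation using $i\circ\Phi^S_g=\Phi_g\circ i$ and the definitions of $\eta$ and $\mathbf{J}^\Phi_\theta$. I would also remark that, since $c$ is a regular value of $h$, the level set $S$ is indeed a submanifold so that the inclusion $i$ is an embedding and the pullback $i^*\theta=\eta$ is well-defined, matching the hypothesis inherited from Lemma \ref{Lemm::hypersurf}.
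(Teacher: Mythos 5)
Your proposal is correct and follows essentially the same route as the paper: restrict the action to $S$ via the $G$-invariance of $h$, verify $(\Phi^S_g)^*\eta=\eta$ by pulling back along $i\circ\Phi^S_g=\Phi_g\circ i$, and set $\mathbf{J}^{\Phi^S}_\eta=\mathbf{J}^\Phi_\theta\circ i$; you simply make explicit the tangency of $\xi_P$ to $S$ and the $i$-relatedness $i_*\xi_S=\xi_P\circ i$, which the paper leaves implicit. One small slip: $\Phi^S_g$ should be the corestriction of $\Phi_g\circ i$ to $S$ (as your own identity $i\circ\Phi^S_g=\Phi_g\circ i$ indicates), not literally the map $\Phi_g\circ i\colon S\to P$.
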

\begin{proof}
    As $h\in C^\infty(P)$ is $G$-invariant, $\Phi\colon G\times P\rightarrow P$ induces a Lie group action $\Phi^S\colon G\times S\rightarrow S$ such that $\Phi_g\circ i=i\circ\Phi^S_g$ for any $g\in G$. Moreover, since $\theta$ is $G$-invariant it follows that
     \[
     \Phi^{S*}_g\eta=\Phi^{S*}_gi^*\theta=i^*\Phi^*_g\theta=i^*\theta=\eta,\qquad \forall g\in G.
     \]
     Therefore, $\Phi^S\colon G\times S\rightarrow S$ is a contact Lie group action. Furthermore, $\mathbf{J}^\Phi_\theta$ induces $\mathbf{J}^{\Phi^S}_\eta$ such that $i^*\mathbf{J}^\Phi_\theta=\mathbf{J}^{\Phi^S}_\eta$ and $\langle \mathbf{J}^{\Phi^S}_\eta,\xi\rangle =\iota_{\xi_S}\eta$ for every $\xi\in\mathfrak{g}$, where $\xi_S$ is the fundamental vector field associated with $\Phi^S$ and $\xi\in\mathfrak{g}$.
\end{proof}

Theorem \ref{Th::conRed} implies the following.
\begin{corollary}
\label{cor:1}
Let $(S,\eta)$ be an energy hypersurface of $(P,\theta,\nabla,h)$, let $\mathbf{J}^{\Phi^S}_\eta\colon  S\rightarrow \mathfrak{g}^*$ be a contact momentum map associated with the Lie group action $\Phi^S\colon G\times S\rightarrow S$ induced by an exact symplectic momentum map $\mathbf{J}^{\Phi}_\theta\colon P\rightarrow \mathfrak{g}^*$. Additionally, assume that $\mu\in\mathfrak{g}^*$ is a regular value of $\mathbf{J}^{\Phi^S}_\eta$ and $\Phi_S$ acts in a quotientable manner on $(\mathbf{J}^{\Phi^S}_\eta)^{-1}(\R^\times\mu)$. Then, \(\left(M_{[\mu]}=(\mathbf{J}^{\Phi^S}_\eta)^{-1}(\mathbb{R}^\times\mu)/K_{[\mu]},\eta_{[\mu]}\right)\) is a contact manifold with 
\[
i_{[\mu]}^*\eta=\pi_{[\mu]}^*\eta_{[\mu]}\,,\quad R_{[\mu]}=\pi_{[\mu]*}R\,,
\]
where $\pi_{[\mu]}\colon(\mathbf{J}^{\Phi^S})^{-1}(\mathbb{R}^\times\mu)\rightarrow (\mathbf{J}^{\Phi^S})^{-1}(\mathbb{R}^\times\mu)/K_{[\mu]}$ is the canonical projection, \linebreak $i_{[\mu]}\colon(\mathbf{J}^{\Phi^S}_\mu)^{-1}(\mathbb{R}^\times\mu)\rightarrow S$ is the natural immersion, and $R$ and $R_{[\mu]}$ are Reeb vector field associated with $(S,\eta)$ and $(M_{[\mu]},\eta_{[\mu]})$, respectively.
\end{corollary}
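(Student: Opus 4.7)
\textbf{Proof plan for Corollary \ref{cor:1}.} The plan is to show that the data assembled in the statement satisfies, verbatim, the hypotheses of Theorem \ref{Th::conRed}, so that the conclusion is obtained by a direct invocation of that theorem applied to the contact manifold $(S,\eta)$ rather than to an abstract co-orientable contact manifold.

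First, I would recall that by Lemma \ref{Lemm::hypersurf} the pair $(S,\eta)$ with $\eta = i^*\theta$ is indeed a co-orientable contact manifold, so Theorem \ref{Th::conRed} is applicable to it. Next, I would invoke Lemma \ref{Lemm::SymIndCon} to produce, from $\Phi$ and $\mathbf{J}^{\Phi}_\theta$, the induced contact Lie group action $\Phi^S\colon G\times S\to S$ satisfying $\Phi^S_g{}^*\eta=\eta$ and the induced contact momentum map $\mathbf{J}^{\Phi^S}_\eta = i^*\mathbf{J}^\Phi_\theta$ with $\iota_{\xi_S}\eta=\langle \mathbf{J}^{\Phi^S}_\eta,\xi\rangle$ for every $\xi\in\mathfrak{g}$. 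This identifies $(S,\eta,\Phi^S,\mathbf{J}^{\Phi^S}_\eta)$ as precisely the object to which the contact MMW reduction applies.

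At this point the hypotheses of Theorem \ref{Th::conRed} are met by assumption: $\mu$ is assumed to be a regular value of $\mathbf{J}^{\Phi^S}_\eta$, and $\Phi^S$ is assumed to act in a quotientable manner on $(\mathbf{J}^{\Phi^S}_\eta)^{-1}(\mathbb{R}^\times\mu)$ by $K_{[\mu]}$ (the Lie subgroup constructed via Proposition \ref{Prop::Kgroup}, which depends only on $\mu$ and $\mathfrak{g}$, so it coincides on the symplectic and contact sides). Applying Theorem \ref{Th::conRed} then directly yields that $M_{[\mu]} := (\mathbf{J}^{\Phi^S}_\eta)^{-1}(\mathbb{R}^\times\mu)/K_{[\mu]}$ is a contact manifold with a unique $\eta_{[\mu]}$ satisfying $\pi_{[\mu]}^*\eta_{[\mu]}=i_{[\mu]}^*\eta$, and that the Reeb vector field $R$ of $(S,\eta)$, which is tangent to $(\mathbf{J}^{\Phi^S}_\eta)^{-1}(\mathbb{R}^\times\mu)$, projects to the Reeb vector field $R_{[\mu]}$ of $(M_{[\mu]},\eta_{[\mu]})$.

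I do not expect a genuine obstacle here; the corollary is essentially a packaging of Theorem \ref{Th::conRed} in the language of energy hypersurfaces. The only point worth flagging is a matter of bookkeeping: one should verify that the $K_{[\mu]}$ used in Theorem \ref{Th::conRed} is the same group one would use in Theorem \ref{Th::OnehomosymRed}, so that the notation in the corollary is consistent. Since $\mathfrak{k}_{[\mu]}$ is defined purely in terms of $\mu\in\mathfrak{g}^*$ and the coadjoint structure of $\mathfrak{g}$, this identification is automatic, and the proof reduces to citing Lemma \ref{Lemm::SymIndCon} together with Theorem \ref{Th::conRed}.
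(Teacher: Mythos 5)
Your proposal is correct and matches the paper's own treatment: the paper presents the corollary as an immediate consequence of Theorem \ref{Th::conRed}, applied to the contact manifold $(S,\eta)$ and the induced contact action and momentum map provided by Lemmas \ref{Lemm::hypersurf} and \ref{Lemm::SymIndCon}, exactly as you argue. Your extra remark that $K_{[\mu]}$ depends only on $\mu$ and the coadjoint structure, hence is the same group on both the symplectic and contact sides, is a sensible bookkeeping check that the paper leaves implicit.
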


Let us illustrate the above procedure with a simple and illustrative example.
\begin{example}
\label{Ex::1}
Let $Q=\{q\in \mathbb{R}^3 \,\mid\, q_1\neq q_2\}$ and let $P=\T^*\mathbb{R}^3\vert_Q = \bigsqcup_{p\in Q}\T^*_p\mathbb{R}^3$. Consider an exact symplectic manifold $(P,\theta)$, with the canonical Liouville one-form and vector field, respectively, given by $\theta =\!-\!\sum^3_{i=1}p_i \d q_i$ and $\nabla\!=\! \sum^3_{i=1}p_i\frac{\partial}{\partial p_i}$, where $\{q_i,p_j\}_{i,j=1,2,3}$ are adapted coordinates on $P$. Consider $h\colon P\rightarrow \mathbb{R}$ and its associated Hamiltonian vector field $X_h$ of the form
\[
h=\frac{1}{2}\sum^3_{i=1}p_i^2-\frac{1}{\vert q_1-q_2\vert}\,,\quad X_h=\sum^3_{i=1}p_i\frac{\partial}{\partial q_i}-\frac{{\rm sgn}(q_1-q_2)}{(q_1-q_2)^2}\left(\frac{\partial}{\partial p_1}-\frac{\partial}{\partial p_2}\right)\,.
\]
Let $\Phi\colon (\lambda_1,\lambda_2;q,p)\in\mathbb{R}^2\times P\mapsto (q_1+\lambda_1,q_2+\lambda_1,q_3+\lambda_2,p)\in P$ be a Lie group action on $P$. It is free, proper, and leaves $h$ invariant. The fundamental vector fields of $\Phi$ are spanned by $\xi_P^1 = \frac{\partial}{\partial q_1}+\frac{\partial}{\partial q_2}$ and $\xi^2_P=\frac{\partial}{\partial q_3}$.

Note that $\nabla\pitchfork h^{-1}(1/2)$. Therefore, $(S,\eta)$ is an energy hypersurface of $(P,\theta,\nabla,h)$ with 
\[
S:= h^{-1}(1/2)=\{ (\tilde{q}_1,\tilde{q_2},t,\alpha,\phi,\varphi)\in P  \, \mid \, \alpha= 1 \}
\]
and 
\[
\eta = -\sqrt{1+\frac{1}{\vert\beta\vert}}\left(\sqrt{2}\cos\phi\,\d\tilde{q}_1+\sqrt{2}\cos\varphi\sin\phi\,\d\beta+\sin\phi\sin\varphi\,\d t\right),
\]
where $\{\tilde{q}_1,\beta,t,\phi,\varphi\}$ are coordinates on $S$ satisfying 
\begin{gather*}
\tilde{p}_1=\frac{1}{2}(p_1+p_2) = \sqrt{\frac{\alpha+\frac{1}{\vert\tilde{q}_2\vert}}{2}} \cos\phi, \quad  \tilde{p}_2 = \frac{1}{2}(p_1-p_2) = \sqrt{\frac{\alpha+\frac{1}{\vert\tilde{q}_2\vert}}{2}} \sin\phi \cos\varphi,\\ p_3=\tilde{p}_3=\sqrt{\alpha+\frac{1}{\vert\tilde{q}_2\vert}} \sin \phi \sin\varphi\,,\\   \tilde{q}_1=\frac{1}{2}(q_1+q_2), \quad \tilde{q}_2=\frac{1}{2}(q_1-q_2) = \beta , \quad q_3=\tilde{q}_3=t\,.
\end{gather*}
Lemma \ref{Lemm::hypersurf}  yields that the Reeb vector field of $(S,\eta)$ reads
\begin{multline*}
R=\frac{-\cos\phi}{\sqrt{2\left(1+\frac{1}{\vert\beta\vert}\right)}}\frac{\partial}{\partial \tilde{q}_1}-\frac{\sin\phi}{\sqrt{1+\frac{1}{\vert \beta\vert}}}\left(\frac{\cos\varphi}{\sqrt{2}}\frac{\partial}{\partial \beta}+\sin\varphi \frac{\partial}{\partial t}\right)\\ - \frac{\mathrm{sgn}(\beta)}{2 \sqrt{2} \beta^2 \left(1+\frac{1}{\vert\beta\vert}\right)^{3/2}}\left( \frac{\sin\varphi}{\sin\phi}\frac{\partial}{\partial \varphi} -\cos\phi\cos\varphi \frac{\partial}{\partial \phi} \right)
\end{multline*}
Note that $R$ is not well-defined whenever $\sin\phi=0$, since $\tilde{p}$ coordinates are described by spherical-like coordinates which are not global. However, redefining $\phi$ allows one to find suitable coordinates for the points where $\sin\phi=0$ and $R$ will be well-defined. Since the further reduction yields $\sin\phi=1$, these technical calculations will be omitted.

Lemma \ref{Lemm::SymIndCon} gives that a Lie group action $\Phi\colon \R^2\times P\rightarrow P$ induces a Lie group action $\Phi^S\colon \R^2\times S\rightarrow S$ that acts freely and properly on $S$. The fundamental vector fields corresponding to $\Phi^S$ are spanned by $\xi_S^1 = \frac{\partial}{\partial \tilde{q}_1}$ and $\xi_S^2 = \frac{\partial}{\partial t}$. Then, a contact momentum map associated with $\Phi^S$ reads 
\[
\mathbf{J}_\eta^{\Phi^S}\colon (\tilde{q}^1,\beta,t,\varphi,\phi)\in S \longmapsto \left( -\sqrt{2}\,\sqrt{1+\frac{1}{\vert \beta\vert}} \cos\phi, -\sqrt{1+\frac{1}{\vert \beta\vert}}\sin\phi \sin\varphi\right)\in  \R^{2*}\,.
\]
Then, for a regular value $\mu=(0,1)\in\R^{2*}$ of $\mathbf{J}^{\Phi^S}_\eta$, one has 
\[
(\mathbf{J}^{\Phi^S}_\eta)^{-1}(\mathbb{R}^\times \mu) = \{ (\tilde{q}_1, \beta, t, \phi, \varphi) \in S \, \mid \, \cos\phi=0 \wedge \sin \varphi  \neq 0 \}
\]
and $\mathfrak{k}_{[\mu]}=\langle\xi^1\rangle$. Theorem \ref{Th::conRed} yields that $(M_{[\mu]}:= (\mathbf{J}^{\Phi^S}_\eta)^{-1}(\mathbb{R}^\times \mu)/K_{[\mu]},\eta_{[\mu]})$ is a reduced contact manifold with $\eta_{[\mu]}=- \sqrt{1+\frac{1}{\vert \beta\vert}} \left( \sqrt{2}\cos\varphi \, \d \beta +\sin\varphi \, \d t\right)$,
where $\{\beta,t,\varphi\}$ are local coordinates on $M_{[\mu]}$. The reduced Reeb vector field is 
\[
R_{[\mu]}=\frac{-1}{\sqrt{1+\frac{1}{\vert \beta\vert}}}\left( \frac{\cos\varphi}{\sqrt{2}}\frac{\partial}{\partial \beta}+\sin\varphi \frac{\partial}{\partial t}\right) - \frac{\mathrm{sgn}(\beta)}{2 \sqrt{2} \beta^2 \left(1+\frac{1}{\vert\beta\vert}\right)^{3/2}} \sin\varphi\frac{\partial}{\partial \varphi}\,.
\]
\end{example}

\section{Reduced energy hypersurfaces}
\label{Sec::RedRestr}
This section presents the opposite direction of the procedure in Section \ref{Sec::RestrRed}. First, the MMW reduction for exact symplectic manifolds is performed, and then the level set of a reduced Hamiltonian function is considered.

Theorem \ref{Th::OnehomosymRed} establishes that, under given assumptions,  $(P_{[\mu]},\theta_{[\mu]},\nabla_{[\mu]},h_{[\mu]})$ is an exact Hamiltonian system. However, the following lemma is essential to consider the level set of a reduced Hamiltonian function as an energy hypersurface.

\begin{lemma}
\label{Lemm::transvReduced}
    Let $(P,\theta,\nabla,h)$ be an exact $G$-invariant Hamiltonian system and let $c\in \R$ and $\mu\in\mathfrak{g}^*$ be regular values of $h$ and an exact symplectic momentum map $\mathbf{J}^\Phi_\theta\colon P\rightarrow\mathfrak{g}^*$, respectively. If $\nabla\pitchfork h^{-1}(c)$, then it follows that $\nabla_{[\mu]}\pitchfork h_{[\mu]}^{-1}(c)$. 
\end{lemma}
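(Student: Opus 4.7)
The plan is to lift the transversality to the submanifold $Q := (\mathbf{J}^\Phi_\theta)^{-1}(\R^\times\mu)$, to which $\nabla$ is tangent (as noted after Theorem~\ref{Th::OnehomosymRed}), and then push it down through the submersion $\tau_{[\mu]}\colon Q\to P_{[\mu]}$. The identity $\tau_{[\mu]}^* h_{[\mu]}=\jmath_{[\mu]}^* h$ from Theorem~\ref{Th::OnehomosymRed} gives
\[
h^{-1}(c)\cap Q=\tau_{[\mu]}^{-1}(h_{[\mu]}^{-1}(c)),
\]
so every $[x]\in h_{[\mu]}^{-1}(c)$ lifts to some $x\in h^{-1}(c)\cap Q$. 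Before performing the reduction of the decomposition itself, I would first check that $c$ is a regular value of $h_{[\mu]}$: differentiating the pullback identity yields $\tau_{[\mu]}^*\d h_{[\mu]}=\jmath_{[\mu]}^*\d h$, and since $\nabla_x\in\T_x Q$ with $\d h_x(\nabla_x)\neq 0$ (by transversality on $P$), the form $\d h_{[\mu]}$ is nonzero at $[x]$ because $\T_x\tau_{[\mu]}$ is surjective. Hence $h_{[\mu]}^{-1}(c)$ is a genuine submanifold of $P_{[\mu]}$.

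Next I would establish transversality inside $Q$. Pick $v\in\T_x Q\subseteq\T_x P$; by $\nabla\pitchfork h^{-1}(c)$ one writes $v=a\nabla_x+w$ with $a\in\R$ and $w\in\T_x h^{-1}(c)$. Since $\nabla_x$ and $v$ both lie in $\T_x Q$, so does $w$, and consequently $w\in\T_x Q\cap\T_x h^{-1}(c)=\T_x(h^{-1}(c)\cap Q)$. This yields the $Q$-internal decomposition $\T_x Q=\langle\nabla_x\rangle+\T_x(h^{-1}(c)\cap Q)$.

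Finally I would push this decomposition forward through $\tau_{[\mu]}$. Given $[v]\in\T_{[x]}P_{[\mu]}$, use surjectivity of $\T_x\tau_{[\mu]}$ to lift $[v]$ to some $v\in\T_x Q$, apply the transversal decomposition, and push forward using $\tau_{[\mu]*}\nabla=\nabla_{[\mu]}$ to obtain
\[
[v]=a(\nabla_{[\mu]})_{[x]}+\T_x\tau_{[\mu]}(w)\in\langle(\nabla_{[\mu]})_{[x]}\rangle+\T_{[x]}h_{[\mu]}^{-1}(c),
\]
because $\T_x\tau_{[\mu]}$ sends $\T_x(h^{-1}(c)\cap Q)$ into $\T_{[x]}h_{[\mu]}^{-1}(c)$. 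The only genuine obstacle is keeping track of which ambient manifold each tangent vector lives in: the hypothesis $\nabla\pitchfork h^{-1}(c)$ is an inherently $P$-level statement that has to be restricted to $Q$ before the projection by $\tau_{[\mu]}$ is even meaningful, and the tangency $\nabla\in\mathfrak{X}(Q)$ is precisely what makes this restriction compatible with the transversal decomposition.
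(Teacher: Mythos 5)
Your argument is correct and runs on the same engine as the paper's proof: the identities of Theorem \ref{Th::OnehomosymRed} transported through the submersion $\tau_{[\mu]}$, together with the tangency of $\nabla$ to $(\mathbf{J}^\Phi_\theta)^{-1}(\R^\times\mu)$. In fact your first paragraph already finishes the job --- the paper's entire proof is the observation that $\tau^*_{[\mu]}\left(\iota_{\nabla_{[\mu]}}\d h_{[\mu]}\right)=\jmath_{[\mu]}^*\left(\iota_{\nabla}\d h\right)$ does not vanish along the relevant level set, which is precisely your regular-value computation and immediately yields transversality, so the subsequent tangent-space decomposition and pushforward, while sound, are redundant.
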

\begin{proof}
Theorem \ref{Th::OnehomosymRed} gives that $\tau^*_{[\mu]}\left(\iota_{\nabla_{[\mu]}}\d h_{[\mu]}\right)=\jmath_{[\mu]}^*\left(\iota_{\nabla}\d h\right)$. As $\jmath_{[\mu]}^*\left(\iota_{\nabla}\d h\right)$ does not vanish, by assumption, $\nabla_{[\mu]}$ is transversal to $h^{-1}_{[\mu]}(c)$.
\end{proof}

Lemma \ref{Lemm::hypersurf} and Lemma \ref{Lemm::transvReduced} applied to \((P_{[\mu]},\theta_{[\mu]},\nabla_{[\mu]},h_{[\mu]})\) imply the following.
\begin{corollary}
        Let $(P_{[\mu]},\theta_{[\mu]},\nabla_{[\mu]},h_{[\mu]})$ be a reduced exact Hamiltonian system. Then, $(S_{[\mu]}:=h_{[\mu]}^{-1}(c), \widetilde{\eta}_{[\mu]}:=\jmath^*\theta_{[\mu]})$ is an energy hypersurface associated with $(P_{[\mu]},\theta_{[\mu]},\nabla_{[\mu]},h_{[\mu]})$, where $\jmath\colon S_{[\mu]}\hookrightarrow P_{[\mu]}$ is the natural embedding.
\end{corollary}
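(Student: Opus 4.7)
The plan is to invoke Lemma \ref{Lemm::hypersurf} applied to the reduced exact Hamiltonian system $(P_{[\mu]},\theta_{[\mu]},\nabla_{[\mu]},h_{[\mu]})$ supplied by Theorem \ref{Th::OnehomosymRed}. To do so, I need to verify the two hypotheses of that lemma at the reduced level: first, that $c\in\mathbb{R}$ is a regular value of $h_{[\mu]}\colon P_{[\mu]}\to\mathbb{R}$, and second, that $\nabla_{[\mu]}$ is transverse to $h_{[\mu]}^{-1}(c)$.

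The transversality condition is precisely the content of Lemma \ref{Lemm::transvReduced}, which I may quote directly since the hypotheses ($c$ a regular value of $h$, $\mu$ a regular value of $\mathbf{J}^\Phi_\theta$, and $\nabla\pitchfork h^{-1}(c)$) are assumed in the statement, being part of the data of the reduced system together with the energy-hypersurface setting. So that step is immediate.

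For the regular-value condition, the key observation is that transversality of $\nabla_{[\mu]}$ to $h_{[\mu]}^{-1}(c)$ automatically forces $\d h_{[\mu]}$ to be non-zero at every point of $h_{[\mu]}^{-1}(c)$: indeed, $\nabla_{[\mu]}\pitchfork h_{[\mu]}^{-1}(c)$ means $\iota_{\nabla_{[\mu]}}\d h_{[\mu]}\neq 0$ on the level set, which in particular forces $\d h_{[\mu]}\neq 0$ there. Since $h_{[\mu]}$ is scalar-valued, this is exactly the regular-value condition. Alternatively, one can pull back along $\tau_{[\mu]}$ and use the identity $\tau^*_{[\mu]}(\iota_{\nabla_{[\mu]}}\d h_{[\mu]})=\jmath_{[\mu]}^*(\iota_{\nabla}\d h)$ from the proof of Lemma \ref{Lemm::transvReduced} together with the submersion property of $\tau_{[\mu]}$.

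With both hypotheses verified, Lemma \ref{Lemm::hypersurf} directly yields that $(S_{[\mu]}:=h_{[\mu]}^{-1}(c),\widetilde{\eta}_{[\mu]}:=\jmath^*\theta_{[\mu]})$ is a contact manifold, namely an energy hypersurface of $(P_{[\mu]},\theta_{[\mu]},\nabla_{[\mu]},h_{[\mu]})$. There is no real obstacle here: the whole statement is essentially a bookkeeping corollary that chains Lemma \ref{Lemm::transvReduced} into Lemma \ref{Lemm::hypersurf}, the only minor point being the observation that transversality of the Liouville vector field already subsumes the regular-value hypothesis on $c$ for the reduced Hamiltonian.
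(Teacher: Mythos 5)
Your proposal is correct and follows essentially the same route as the paper, which proves the corollary simply by applying Lemma \ref{Lemm::transvReduced} and then Lemma \ref{Lemm::hypersurf} to the reduced system $(P_{[\mu]},\theta_{[\mu]},\nabla_{[\mu]},h_{[\mu]})$. Your extra remark that $c$ is automatically a regular value of $h_{[\mu]}$ (best seen via the identity $\tau^*_{[\mu]}(\iota_{\nabla_{[\mu]}}\d h_{[\mu]})=\jmath_{[\mu]}^*(\iota_{\nabla}\d h)$ and the surjectivity of $\tau_{[\mu]}$) is a sensible bookkeeping point that the paper leaves implicit.
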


\begin{example}
\label{Ex::2}
Consider the exact symplectic manifold $(P,\theta,\nabla,h)$ in Example \ref{Ex::1} with adapted coordinates $\{q_i,p_j\}_{i,j=1,2,3}$ 
on $P$. Recall that $\Phi\colon \R^2\times P\rightarrow P$ acts freely and properly on $P$ and the corresponding fundamental vector fields are $\xi_P^1=\frac{\partial}{\partial q_1}+\frac{\partial}{\partial q_2}$ and $\xi_P^2=\frac{\partial}{\partial q_3}$. Then, an exact symplectic momentum map associated with $\Phi$ is of the form
\[
\mathbf{J}_\theta^\Phi\colon (q,p)\in P\longmapsto (-p_1 - p_2, -p_3)\in \mathbb{R}^{2*}\,.
\]
As in Example \ref{Ex::1}, let us fix a regular value $\mu = (0,1) \in \mathbb{R}^{2*}$. Then, $(\mathbf{J}^\Phi_\theta)^{-1}(\mathbb{R}^\times \mu) = \linebreak \{ (q,p)\! \in \! P \! \, \! \mid \, p_1+p_2 = 0 \, \wedge \, p_3\neq 0 \}$ and $\mathfrak{k}_{[\mu]}\!=\!\langle \xi^1
\rangle$. Applying Theorem \ref{Th::OnehomosymRed}, $(P_{[\mu]},\theta_{[\mu]},\! \nabla_{[\mu]},h_{[\mu]})$ becomes a four-dimensional exact Hamiltonian system with
\[
\theta_{[\mu]}=-2 \tilde{p}_2 \, \d\tilde{q}_2 - \tilde{p}_3 \, \d \tilde{q}_3,\quad h_{[\mu]} =\frac{1}{2}(2 \tilde{p}_2^2 +\tilde{p}_3^2) - \frac{1}{2 \vert \tilde{q}_2 \vert}\,,\quad\nabla_{[\mu]}=\tilde{p}_2\frac{\partial}{\partial \tilde{p}_2}+\tilde{p}_3\frac{\partial}{\partial \tilde{p}_3}\,,
\]
where $(\tilde{q}_2\!:=\!\frac{1}{2}(q_1\!-\!q_2),\tilde{q}_3\!:=\!q_3,\tilde{p}_2\!:=\!\frac{1}{2}(p_1\!-\!p_2),\tilde{p}_3\!:=\!p_3)$ are new coordinates on $P_{[\mu]}$. By Proposition \ref{Prop::OneHomoDynRed}, the reduced Hamiltonian vector field is of the form 
\[
X_{h_{[\mu]}}=\tilde{p}_2\frac{\partial}{\partial \tilde{q}_2}+\tilde{p}_3\frac{\partial}{\partial \tilde{q}_3}-\frac{{\rm sgn}(\tilde{q}_2)}{4(\tilde{q}_2)^2}\frac{\partial}{\partial \tilde{p}_2}\,.
\]
As in Example \ref{Ex::1}, fix a regular value $c\!=\!1/2$ of $h_{[\mu]}$ and note that $\nabla_{[\mu]}\pitchfork h_{[\mu]}^{-1}(1/2)$. \linebreak Therefore, Lemma \ref{Lemm::SymIndCon} yields that $(S_{[\mu]},\tilde{\eta}_{[\mu]}\!:=\!\jmath^*\theta_{[\mu]})$ is an energy hypersurface of \linebreak $(P_{[\mu]},\theta_{[\mu]},\nabla_{[\mu]},h_{[\mu]})$. Consider the coordinates satisfying
\[
    \tilde{q}_2 = \beta\,,\quad \tilde{q}_3=t \,,\quad \tilde{p}_2 = \sqrt{\frac{\alpha+\frac{1}{\vert\beta\vert}}{2}} \cos \varphi\,,\quad \tilde{p}_3  = \sqrt{\alpha+\frac{1}{\vert\beta\vert}} \sin\varphi\,.
\]
Then, $S_{[\mu]} = \{(\beta, t, a, \varphi) \in   P_{[\mu]} \, \mid \, \alpha=1\}$ and the contact form reads \linebreak $\tilde{\eta}_{[\mu]}=- \sqrt{1+\frac{1}{\vert \beta\vert}} \left( \sqrt{2} \cos\varphi\, \d \beta +\sin\varphi \,\d t\right)$. Furthermore, Lemma \ref{Lemm::SymIndCon} ensures that rescaled $X_{h_{[\mu]}}$, restricted to $S_{[\mu]}$ becomes the Reeb vector field, $\tilde{R}_{[\mu]}$, related to $(S_{[\mu]},\tilde{\eta}_{[\mu]})$, and reads
\[
\tilde{R}_{[\mu]}=\frac{-1}{\sqrt{1+\frac{1}{\vert \beta\vert}}} \left( \frac{\cos\varphi}{\sqrt{2}} \frac{\partial}{\partial \beta} +\sin\varphi \frac{\partial}{\partial t} \right) - \frac{\mathrm{sgn}(\beta)}{2 \sqrt{2} \beta^2 \left(1+\frac{1}{\vert\beta\vert}\right)^{3/2}}\sin \varphi\frac{\partial}{\partial \varphi}\,.
\]
\end{example}

\section{The equivalence of the two procedures}
This section establishes the equivalence between the two procedures introduced in Sections \ref{Sec::RedRestr} and Section \ref{Sec::RestrRed}. Specifically, it is demonstrated that the submanifolds \(S_{[\mu]} = h^{-1}_{[\mu]}(c)\) and \(M_{[\mu]} = (\mathbf{J}^{\Phi^S}_\eta)^{-1}(\mathbb{R}^\times\mu)/K_{[\mu]}\) are diffeomorphic. To illustrate this correspondence, consider the following commutative diagram:

\begin{center}
\small
\begin{tikzcd}
    & P & \\
    (\mathbf{J}^\Phi_\theta)^{-1}(\mathbb{R}^\times\mu)\arrow[ur,hook,"\jmath_{[\mu]}"]\arrow[d,"\tau_{[\mu]}"] & & S\arrow[ul,hook',"i"']\\
    P_{[\mu]}=(\mathbf{J}^\Phi_\theta)^{-1}(\mathbb{R}^\times\mu)/K_{[\mu]}& & (\mathbf{J}^{\Phi^S}_\eta)^{-1}(\mathbb{R}^\times\mu)\arrow[u,hook',"i_{[\mu]}"']\arrow[d,"\pi_{[\mu]}"]\arrow[ull,hook',"i\vert_{(\mathbf{J}^{\Phi^S}_\eta)^{-1}(\R^\times\mu)}"']\\
    S_{[\mu]}\arrow[rr,"\kappa"]\arrow[u,hook',"\jmath"']& & M_{[\mu]}=(\mathbf{J}^{\Phi^S}_\eta)^{-1}(\mathbb{R}^\times\mu)/K_{[\mu]}\arrow[ull,hook',"i_{M_{[\mu]}}"'] 
\end{tikzcd}
\end{center}

If $\mu\in\mathfrak{g}^*$ is a regular value of both $\mathbf{J}^\Phi_\theta$ and $\mathbf{J}^{\Phi^S}_\eta$, Lemma \ref{Lemm::hypersurf} and Lemma \ref{Lemm::SymIndCon} imply the existence of the following embeddings
\[
i\vert_{(\mathbf{J}^{\Phi^S}_\eta)^{-1}(\R^\times\mu)}\colon (\mathbf{J}^{\Phi^S}_\eta)^{-1}(\R^\times\mu) \hookrightarrow (\mathbf{J}^{\Phi}_\theta)^{-1}(\R^\times\mu),\qquad
i_{M_{[\mu]}}\colon M_{[\mu]}\hookrightarrow P_{[\mu]}\,.
\]

\begin{theorem}
    Assume that the hypothesis of Theorem \ref{Th::OnehomosymRed} and Theorem \ref{Th::conRed} hold for an exact Hamiltonian system $(P,\theta,\nabla,h)$ and its associated energy hypersurface $(S,\eta)$, respectively. Let $\kappa\colon S_{[\mu]}\rightarrow M_{[\mu]}$ be defined as $i_{M_{[\mu]}}\circ\kappa=\jmath$. Then, $\kappa$ is a diffeomorphism. 
\end{theorem}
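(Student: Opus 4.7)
The plan is to establish that $\jmath(S_{[\mu]})$ and $i_{M_{[\mu]}}(M_{[\mu]})$ coincide as subsets of $P_{[\mu]}$, which yields $\kappa$ as a set-theoretic bijection; I will then upgrade it to a diffeomorphism by invoking the universal property of embedded submanifolds. To identify the two subsets, I fix an arbitrary $x\in P_{[\mu]}$, choose $p\in(\mathbf{J}^{\Phi}_\theta)^{-1}(\R^\times\mu)$ with $\tau_{[\mu]}(p)=x$, and argue as follows. By the third identity in Theorem \ref{Th::OnehomosymRed}, $h_{[\mu]}(x)=h(p)$, so $x\in\jmath(S_{[\mu]})$ iff $h(p)=c$. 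On the other side, Lemma \ref{Lemm::SymIndCon} gives $\mathbf{J}^{\Phi^S}_\eta=i^*\mathbf{J}^{\Phi}_\theta$, which yields $(\mathbf{J}^{\Phi^S}_\eta)^{-1}(\R^\times\mu)=S\cap(\mathbf{J}^{\Phi}_\theta)^{-1}(\R^\times\mu)$. Because $h$ is $G$-invariant, every $K_{[\mu]}$-orbit in $(\mathbf{J}^{\Phi}_\theta)^{-1}(\R^\times\mu)$ sits either entirely inside $S$ or entirely outside it, so the orbit of $p$ meets $(\mathbf{J}^{\Phi^S}_\eta)^{-1}(\R^\times\mu)$ iff $p\in S$ iff $h(p)=c$; commutativity of the diagram converts this into $x\in i_{M_{[\mu]}}(M_{[\mu]})$ iff $h(p)=c$. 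Both equivalences together give $\jmath(S_{[\mu]})=i_{M_{[\mu]}}(M_{[\mu]})$.

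Since $i_{M_{[\mu]}}$ is an injective embedding whose image equals $\jmath(S_{[\mu]})$, the formula $\kappa:=i_{M_{[\mu]}}^{-1}\circ\jmath$ defines a bijection satisfying $i_{M_{[\mu]}}\circ\kappa=\jmath$, with inverse $\kappa^{-1}=\jmath^{-1}\circ i_{M_{[\mu]}}$. Smoothness follows from the standard factorisation property of embeddings: the smooth map $\jmath$ takes values in the embedded submanifold $i_{M_{[\mu]}}(M_{[\mu]})$, so $\kappa=i_{M_{[\mu]}}^{-1}\circ\jmath$ is smooth into $M_{[\mu]}$; symmetrically, the smooth map $i_{M_{[\mu]}}$ takes values in $\jmath(S_{[\mu]})$, so $\kappa^{-1}=\jmath^{-1}\circ i_{M_{[\mu]}}$ is smooth, and $\kappa$ is therefore a diffeomorphism.

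The main technical obstacle is precisely the set equality in the first paragraph, where one must keep the two momentum maps, the two reductions and the energy-level constraint straight, and show that they interact coherently with the $K_{[\mu]}$-action. The critical observation is that the $G$-invariance of $h$ forces each $K_{[\mu]}$-orbit to respect the energy hypersurface $S=h^{-1}(c)$; this compatibility is exactly what makes the operations ``restrict to $S$'' and ``reduce by $K_{[\mu]}$'' commute at the level of orbits, and once it is in place the rest of the argument is purely formal from the universal property of embeddings.
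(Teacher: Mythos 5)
Your proposal is correct, and it is in fact more complete than the argument given in the paper. The paper's proof is terse: it simply notes that $i_{M_{[\mu]}}$ and $\jmath$ are embeddings and that $\mu$ is a regular value of both momentum maps, asserts from this that $\kappa$ is bijective, and then invokes the inverse function theorem to conclude that $\kappa$ is a diffeomorphism (leaving implicit both the identification of the two images inside $P_{[\mu]}$ and the verification that $\T\kappa$ is an isomorphism, which the inverse function theorem formally requires). You instead supply the missing bijectivity argument explicitly: using $\tau_{[\mu]}^*h_{[\mu]}=\jmath_{[\mu]}^*h$, the relation $\mathbf{J}^{\Phi^S}_\eta=i^*\mathbf{J}^{\Phi}_\theta$ (so that $(\mathbf{J}^{\Phi^S}_\eta)^{-1}(\R^\times\mu)=S\cap(\mathbf{J}^{\Phi}_\theta)^{-1}(\R^\times\mu)$), and the $G$-invariance of $h$ (so that $K_{[\mu]}$-orbits respect $S=h^{-1}(c)$ and the fibres of $\tau_{[\mu]}$ interact coherently with the restriction to $S$), you show $\jmath(S_{[\mu]})=i_{M_{[\mu]}}(M_{[\mu]})$ as subsets of $P_{[\mu]}$. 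You then obtain smoothness of $\kappa$ and of $\kappa^{-1}$ from the factorisation property of embedded submanifolds, applied in both directions, which avoids the inverse function theorem and any pointwise differential computation; note that this step uses that $S_{[\mu]}=h_{[\mu]}^{-1}(c)$ is embedded, which follows since $\iota_{\nabla_{[\mu]}}\d h_{[\mu]}\neq 0$ on it (Lemma \ref{Lemm::transvReduced}) forces $c$ to be a regular value of $h_{[\mu]}$. So the overall strategy (bijectivity plus upgrading to a diffeomorphism via the submanifold structure) matches the paper, but your route makes the bijectivity step rigorous and replaces the inverse-function-theorem step by a cleaner categorical argument; the paper's version is shorter but relies on assertions it does not justify in detail.
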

\begin{proof}
    Recall that $i_{M_{[\mu]}}\colon M_{[\mu]}\hookrightarrow P_{[\mu]}$ and $\jmath\colon S_{[\mu]}\hookrightarrow P_{[\mu]}$ are embeddings and $\mu\in\mathfrak{g}^*$ is a regular value of $\mathbf{J}^\Phi_\theta$ and $\mathbf{J}^{\Phi^S}_\eta$. Hence $\kappa$ is bijective. Thus, from the inverse function theorem follows that $\kappa\colon S_{[\mu]}\rightarrow M_{[\mu]}$ is a diffeomorphism.
\end{proof}

Indeed, Example \ref{Ex::1} and Example \ref{Ex::2} show that $S_{[\mu]}\simeq M_{[\mu]}$.

\section*{Acknowledgements}

J. Lange and B.M. Zawora thank the IDUB program mikrogrants from the Faculty of Physics at the University of Warsaw for their partial financial support. B.M. Zawora also acknowledges support from the Spanish Ministry of Science and Innovation grants PID2021-125515NB-C21. The authors also acknowledge Professor J. de Lucas for helpful discussion.

\end{document}